\newcommand{\grad}{\nabla}
\newcommand{\bfu}{\mathbf{u}}
\newcommand{\bfX}{\mathbf{X}}
\newcommand{\bfY}{\mathbf{Y}}
\newcommand{\bfa}{\boldsymbol{a}}
\newcommand{\bfx}{\mathbf{x}}
\DeclareMathOperator{\diver}{div}
\DeclareMathOperator{\curl}{curl}
\newtheorem{theorem}{Theorem}[section]
\newtheorem{lemma}[theorem]{Lemma}
\begin{document}
%
%
\setlength{\parskip}{6pt plus 1pt minus 1pt}

\title[Damped Infinite Energy Solutions of the 3D Euler and Boussinesq Equations]{Damped Infinite Energy Solutions of the 3D Euler\\ and Boussinesq equations}

\author{William Chen}
\address{Department of Mathematics and Statistics, Williams College, Williamstown, MA 01267}
\email{wyc1@williams.edu}
\author{Alejandro Sarria}
\address{Department of Mathematics and Statistics, Williams College, Williamstown, MA 01267}
\email{Alejandro.Sarria@williams.edu}


\subjclass[2010]{35B44, 35B65, 35Q31, 35Q35}

\keywords{3D Euler; 3D Boussinesq; Blowup; Damping; Infinite-energy solutions}

\begin{abstract}
We revisit a family of infinite-energy solutions of the 3D incompressible Euler equations proposed by Gibbon et al. \cite{gibbon1} and shown to blowup in finite time by Constantin \cite{constantin2}. By adding a damping term to the momentum equation we examine how the damping coefficient can arrest this blowup. Further, we show that similar infinite-energy solutions of the inviscid 3D Boussinesq system with damping can develop a singularity in finite time as long as the damping effects are insufficient to arrest the (undamped) 3D Euler blowup in the associated damped 3D Euler system.
\end{abstract}

\maketitle

\section{Introduction}

We consider a family of exact infinite-energy solutions of two three-dimensional (3D) fluid models with a damping term, the incompressible Euler equations
\begin{equation}
\label{dampede}
\begin{cases}
&\bfu_t+\bfu\cdot\nabla \bfu+\alpha \bfu=-\nabla p,
\\
&\diver\bfu=0,
\end{cases}
\end{equation}
and the inviscid Boussinesq system 
\begin{equation}
\label{dampedb}
\begin{cases}
&\bfu_t+\bfu\cdot\nabla \bfu+\alpha \bfu=-\nabla p+\theta\mathbf{e}_3,
\\
&\theta_t+\bfu\cdot\nabla\theta=0,
\\
&\diver\bfu=0.
\end{cases}
\end{equation}
In \eqref{dampede}-\eqref{dampedb}, $\bfu$ represents the fluid velocity, $p$ is the scalar pressure, $\theta$ is the scalar temperature in the context of thermal convection or the density in the modeling of geophysical fluids, $\mathbf{e}_3=(0,0,1)^T$, and $\alpha\in\mathbb{R}^+$ is a real parameter. For $\alpha=0$, \eqref{dampede} reduces to the standard 3D Euler equations describing the motion of an ideal, incompressible homogeneous fluid, while \eqref{dampedb} becomes the standard 3D inviscid Boussinesq system modeling large scale atmospheric dynamics and oceanic flows \cite{gill, majda1, pedlosky}. If $\theta\equiv0$,   \eqref{dampedb} reduces to \eqref{dampede}. When $\alpha\bfu$ in \eqref{dampede} is replaced by the diffusion term $-\nu\Delta\bfu$, we obtain the classical 3D Navier-Stokes equations. The global (in time) regularity problem for the aforementioned 3D models are long-standing open problems in mathematical fluid dynamics. See Constantin \cite{constantin1} for a history
and survey of results on the 3D Euler regularity problem and Fefferman \cite{fefferman} for a more precise account of the Navier-Stokes regularity problem. In general, the main obstacle in obtaining global existence of smooth solutions of these 3D models for general initial data is controlling nonlinear growth due to vortex stretching \cite{hou22, hou1}. To gain insight into this challenging problem, many researchers have turned their efforts to the 2D viscous Boussinesq equations 
\begin{equation}
\label{b}
\begin{cases}
&\bfu_t+\bfu\cdot\nabla \bfu=-\nabla p+\nu\Delta\bfu+\theta\mathbf{e}_2,
\\
&\theta_t+\bfu\cdot\nabla\theta=\kappa\Delta\theta,
\\
&\diver\bfu=0.
\end{cases}
\end{equation}
System \eqref{b} can be shown to be formally identical to the 3D Euler or Navier-Stokes equations for axisymmetric swirling flows and retains key features of the 3D models such as the vortex stretching  mechanism (see, e.g., \cite{majdabertozzi}). The global regularity issue for \eqref{b} has been settled in the affirmative under various degrees of viscosity and dissipation: with full viscosity $\nu>0$ and $\kappa>0$, partial viscosity $\nu>0$ and $\kappa=0$, or $\nu=0$ and $\kappa>0$, for anisotropic models \cite{adhikaria01, chae07, hou0, lai}, and with fractional Laplacian dissipation (see \cite{yang} and references therein). In contrast, the question of global regularity for \eqref{b} in the inviscid case $\nu=\kappa=0$ remains open; it is not apparent how to control vortex stretching when there is no dissipation ($\nu=0$) and no thermal diffusion ($\kappa=0$). Using a somewhat different approach, Adhikaria et al. \cite{adhikaria} replaced $\nu\Delta\bfu$ and $\kappa\Delta\theta$ in \eqref{b} with damping terms $-\alpha\bfu$ and $-\beta\theta$ for $\alpha>0$ and $\beta>0$ real parameters. Although the resulting damping effects are insufficient to control vortex stretching for general initial data, the authors showed that a local (in time) solution will persist globally in time if the initial data is small enough in some homogeneous Besov space. 

The aim of this work is to examine how damping affects the global regularity of a particular class of infinite-energy solutions of \eqref{dampede} and \eqref{dampedb} which, in the absence of damping ($\alpha=0$), blowup in finite-time from smooth initial data. More particularly, the fluid velocity and temperature considered here have the form
\begin{equation}
\label{ansatz}
\bfu(\bfx,z,t)=(u(\bfx,t),v(\bfx,t),z\gamma(\bfx,t)),\qquad \theta(\bfx,z,t)=z\rho(\bfx,t)
\end{equation}
for $(\bfx,z)=(x,y,z)$. Our spatial domain will be the semi-bounded 3D channel 
\begin{equation}
\label{domain}
\Pi\equiv\{(\bfx,z)\in Q\times\mathbb{R}\}
\end{equation}
of rectangular periodic cross-section $Q\equiv[0,1]^2$ with $\bfu$ and $\theta$ both periodic in the $x$ and $y$ variables with period one. Note that the unbounded geometry of \eqref{domain} in the vertical direction endows the fluid under consideration with, at best, locally finite kinetic energy.

Solutions of the 3D incompressible Euler equations
\begin{equation} 
\label{e}
\begin{cases}
&\bfu_t+\bfu\cdot\nabla \bfu=-\nabla p
\\
&\diver\bfu=0
\end{cases}
\end{equation}
of the form \eqref{ansatz}i) were proposed by Gibbon et al. \cite{gibbon1}, and then shown to blowup in finite time numerically by Ohkitani and Gibbon \cite{ohkitani}, and analytically by Constantin \cite{constantin2}. See also \cite{childress, stuart, sarria1, sarria2} for blowup results of other similar infinite-energy solutions of the Euler and Boussinesq equations in two and three dimensions. For convenience of the reader, we now summarize Constantin's blowup result. 

The set-up used by Constantin is effectively the same as the one considered here. Imposing a velocity field of the form \eqref{ansatz}i) on \eqref{e} subject to periodicity in the $x$ and $y$ variables of period one and with spatial domain \eqref{domain}, it follows that the vertical component of the velocity field, $z\gamma$, satisfies the vertical component of the momentum equation \eqref{e}i) if the mean-zero function $\gamma=-\left(u_x+v_y\right)$ solves the nonlocal two-dimensional equation 
\begin{equation}
\label{undampedgammaeqn}
\gamma_t+\bfu'\cdot\nabla\gamma=-\gamma^2+2\int_{Q}{\gamma^2\,d\mathbf{x}}
\end{equation}
with $\bfu'=(u,v)$. For $(\bfa,t)\equiv(a_1,a_2,t)$, $\bfa\in Q$, Constantin constructed the solution formula   
\begin{equation}
\label{undampedsln}
\begin{split}
\gamma(\bfY(\bfa,t),t)=-\frac{\tau'(t)}{\tau(t)}\left\{\frac{1}{1+\gamma_0(\bfa)\tau(t)}-\left(\int_{Q}{\frac{d\bfa}{1+\gamma_0(\bfa)\tau(t)}}\right)^{-1}\int_{Q}{\frac{d\bfa}{(1+\gamma_0(\bfa)\tau(t))^2}}\right\}
\end{split}
\end{equation}
for $\gamma(\bfa,0)=\gamma_0(\bfa)$, $\tau$ satisfying the initial value problem (IVP)
\begin{equation}
\label{undampedtau}
\tau'(t)=\left(\int_Q{\frac{d\bfa}{1+\gamma_0(\bfa)\tau(t)}}\right)^{-2},\qquad \tau(0)=0,
\end{equation}
and $\bfa\to\mathbf{Y}(\bfa,t)$ the 2D flow-map defined by
\begin{equation*}
\label{undampedflowmap}
\frac{d\mathbf{Y}}{dt}=\bfu'(\mathbf{Y}(\bfa,t),t),\qquad \mathbf{Y}(\bfa,0)=\bfa.
\end{equation*}
By comparing the blowup rates of the time integrals in \eqref{undampedsln} against the local term, Constantin proved the following blowup result for a large class of smooth initial data $\gamma_0$. 

\begin{theorem}[\cite{constantin2}]
\label{blowupundamped}
Consider the initial boundary value problem for \eqref{undampedgammaeqn} with smooth mean-zero initial data $\gamma_0$ and periodic boundary conditions. Suppose $\gamma_0$ attains a negative minimum $m_0$ at a finite number of locations $\bfa_0\in Q$ and, near these locations, $\gamma_0$ has non-vanishing second-order derivatives. Set $\tau^*=-\frac{1}{m_0}$ and let
\begin{equation}
\label{undampedeulertime}
t^E(\tau)=\int_0^{\tau}\left(\int_{Q}{\frac{d\bfa}{1+\gamma_0(\bfa)\mu}}\right)^2d\mu.
\end{equation}
Then there exists a finite time $T^E>0$, given by 
\begin{equation*}
\label{undampedeulerblowup}
T^E\equiv\lim_{\tau\nearrow\tau^*}t^E(\tau),
\end{equation*}
such that both the maximum and minimum values of $\gamma$ diverge to positive and respectively negative infinity as $t\nearrow T^E$. 
\end{theorem}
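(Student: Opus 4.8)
The plan is to exploit the explicit representation \eqref{undampedsln}--\eqref{undampedtau} and reduce everything to the asymptotic behaviour of the two integrals
\[
I_1(\tau)=\int_Q\frac{d\bfa}{1+\gamma_0(\bfa)\tau},\qquad I_2(\tau)=\int_Q\frac{d\bfa}{(1+\gamma_0(\bfa)\tau)^2}
\]
as $\tau\nearrow\tau^*=-1/m_0$. First I would observe from \eqref{undampedtau} that $\tau(t)$ is smooth and strictly increasing for small $t$, with $dt/d\tau=I_1(\tau)^2$; integrating this identity gives precisely $t=t^E(\tau)$ as in \eqref{undampedeulertime}, so $t^E$ is the physical time at which the Lagrangian parameter reaches the value $\tau$. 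Hence $\tau\nearrow\tau^*$ corresponds to $t\nearrow T^E$, and the whole theorem becomes a statement about the behaviour of \eqref{undampedsln} as $\tau\to\tau^*$.

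The crux is the local analysis near the minimizing points. Writing $\epsilon=\tau^*-\tau>0$ and using $1+m_0\tau^*=0$, one has $1+m_0\tau=|m_0|\epsilon$. Near each minimum $\bfa_0$ a second-order Taylor expansion with positive-definite Hessian $H=D^2\gamma_0(\bfa_0)$ (guaranteed nondegenerate by the non-vanishing second-order derivative hypothesis) gives $\gamma_0(\bfa)-m_0\approx\tfrac12(\bfa-\bfa_0)^{T}H(\bfa-\bfa_0)$, so that
\[
1+\gamma_0(\bfa)\tau\approx|m_0|\epsilon+\tfrac{\tau^*}{2}(\bfa-\bfa_0)^{T}H(\bfa-\bfa_0).
\]
Because $Q$ is two-dimensional, passing to rescaled polar coordinates and integrating the radial variable yields the key rates
\[
I_1(\tau)\sim c_1\,|\log\epsilon|,\qquad I_2(\tau)\sim \frac{c_2}{\epsilon}\qquad(\tau\to\tau^*),
\]
with $c_1,c_2>0$ depending on $\det H$ and the number of minima. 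This dimension-two balance---a logarithmic divergence of $I_1$ against a simple pole of $I_2$---is where I expect the main difficulty to lie, since it requires justifying that the contributions away from the $\bfa_0$ remain bounded and controlling the quadratic-approximation error uniformly in $\epsilon$.

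Granting these rates, the finiteness of $T^E$ is immediate: since $dt/d\tau=I_1(\tau)^2\sim c_1^2|\log\epsilon|^2$ and $|\log\epsilon|^2$ is integrable near $\epsilon=0$, the limit $T^E=\lim_{\tau\nearrow\tau^*}t^E(\tau)$ exists and is finite. Finally, to read off the blowup I would substitute the rates into \eqref{undampedsln}, using $\tau'(t)/\tau(t)=1/(\tau I_1^2)$ and $\tau\to\tau^*$. Evaluating at a minimizing point $\bfa=\bfa_0$, the local term $1/(|m_0|\epsilon)$ dominates the nonlocal term $I_2/I_1\sim c_2/(c_1\epsilon|\log\epsilon|)$, so
\[
\gamma(\mathbf{Y}(\bfa_0,t),t)\sim-\frac{1}{\tau^*\,|m_0|\,c_1^2}\,\frac{1}{\epsilon\,|\log\epsilon|^2}\longrightarrow-\infty.
\]
Conversely, at any fixed $\bfa$ with $\gamma_0(\bfa)>m_0$ the local term stays bounded while $-I_2/I_1\to-\infty$, giving
\[
\gamma(\mathbf{Y}(\bfa,t),t)\sim\frac{I_2}{\tau^*I_1^3}\sim\frac{c_2}{\tau^*c_1^3}\,\frac{1}{\epsilon\,|\log\epsilon|^3}\longrightarrow+\infty,
\]
so the minimum of $\gamma$ diverges to $-\infty$ and the maximum to $+\infty$ as $t\nearrow T^E$. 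That these pointwise-Lagrangian extrema are the genuine spatial extrema follows because $\bfa\mapsto\mathbf{Y}(\bfa,t)$ is a diffeomorphism of $Q$ up to the blowup time.
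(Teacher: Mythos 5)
Your proposal is correct and takes essentially the same route as the paper, which (crediting Constantin) outlines this argument inside the proof of Theorem \ref{dampedeulerthm}: a quadratic sandwich of $\gamma_0$ near its minimizers using the Hessian eigenvalues yields the logarithmic rate \eqref{int1} for the first integral and the simple-pole rate \eqref{int2} for the second, integrability of $\ln^2(\tau^*-\tau)$ gives finiteness of $T^E$ exactly as in \eqref{asymp}, and the local-versus-nonlocal comparison in \eqref{undampedsln} produces the same blowup rates $-C/\bigl((\tau^*-\tau)\ln^2(\tau^*-\tau)\bigr)\to-\infty$ at the minimizers and $+\infty$ at the rate $1/\bigl((\tau^*-\tau)|\ln(\tau^*-\tau)|^3\bigr)$ elsewhere. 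Your closing remark that the Lagrangian extrema coincide with the spatial extrema via the flow-map diffeomorphism is a point the paper leaves implicit, but the substance of the two arguments is identical.
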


The outline for the remainder of the paper is as follows. In Section \ref{prelim} we introduce the damped two-dimensional equations \eqref{dampedeqnb}-\eqref{dampedeqne} and summarize the main results of the paper. Then in Section \ref{solution} we derive the solution formulae \eqref{gammab}-\eqref{tauivpe}, which we use in Section \ref{proofs} to prove the Theorems.

\section{Preliminaries}
\label{prelim}


\subsection{The Damped Two-dimensional Equations}\hfill

As stated in the previous section, we are interested in the global regularity of solutions of \eqref{dampede} and \eqref{dampedb} of the form \eqref{ansatz} subject to periodic boundary conditions in the $x$ and $y$ variables (period one), and with spatial domain \eqref{domain}. First note that, from incompressibility and periodicity, $\gamma=-\left(u_x+v_y\right)$ satisfies the mean-zero condition
\begin{equation}
\label{zeromean}
\int_Q{\gamma(\bfx,t)\,d\bfx}=0.
\end{equation}
Then imposing the ansatz \eqref{ansatz} on the damped Boussinesq system \eqref{dampedb}, it is easy to check that the vertical component of the velocity field, $z\gamma$, and the scalar temperature, $z\rho$, satisfy the vertical component of \eqref{dampedb}i) and equation \eqref{dampedb}ii) if $\gamma$ and $\rho$ solve the nonlocal 2D system
\begin{equation}
\label{dampedeqnb}
\begin{cases}
\gamma_t+\bfu'\cdot\nabla\gamma=\rho-\gamma^2-\alpha\gamma+I(t),\quad &\bfx\in Q,\quad t>0,
\\
\rho_t+\bfu'\cdot\nabla\rho=-\gamma\rho,\qquad &\bfx\in Q,\quad t>0
\end{cases}
\end{equation}
with $\bfu'=(u,v)$, $\alpha>0$ a real parameter, and
\begin{equation}
\label{nonlocalb}
I(t)=2\int_Q{\gamma^2(\bfx,t)\,d\bfx}-\int_Q\rho(\bfx,t)\,d\bfx.
\end{equation}

For $\rho\equiv0$, \eqref{dampedeqnb}-\eqref{nonlocalb} reduces to 
\begin{equation}
\label{dampedeqne}
\begin{cases}
\gamma_t+\bfu'\cdot\nabla\gamma=-\gamma^2-\alpha\gamma+I(t),\quad &\bfx\in Q,\quad t>0,
\\
I(t)=2\int_Q{\gamma^2(\bfx,t)\,d\bfx},
\end{cases}
\end{equation}
which is just the associated 2D equation obtained from the vertical component of the damped 3D Euler system \eqref{dampede}. 

For simplicity we will refer to equations \eqref{undampedgammaeqn} and \eqref{dampedeqne}, and the system \eqref{dampedeqnb}-\eqref{nonlocalb}, as \emph{the undamped Euler equation}, \emph{the damped Euler equation}, and \emph{the damped Boussinesq system}, respectively. 

Before summarizing the main results of the paper, we define some notation that will be helpful in differentiating among solutions of the various equations under consideration.

\subsection{Notation}\hfill

For $\alpha>0$, we denote by $(\gamma^{B}_{\alpha},\rho_{\alpha})$ and $\gamma^{E}_{\alpha}$ the solution of the damped Boussinesq system \eqref{dampedeqnb}-\eqref{nonlocalb} and the damped Euler equation \eqref{dampedeqne}, respectively. The undamped ($\alpha=0$) counterparts of $(\gamma^{B}_{\alpha},\rho_{\alpha})$ and $\gamma^E_{\alpha}$ will be denoted by dropping the subscript, i.e., $(\gamma^{B},\rho)$ and respectively $\gamma^E$, with the latter given (along characteristics) by formula \eqref{undampedsln}. Other notation will be introduced in later sections in a similar manner. Lastly, by $C$ we mean a generic positive constant that may change in value from line to line. 



\subsection{Summary of Results}\hfill


For a smooth initial condition $\gamma_0$ satisfying the conditions of Theorem \ref{blowupundamped}, we determine in Theorem \ref{dampedeulerthm} below ``how much'' damping is required for the solution $\gamma^E_{\alpha}$ of the damped Euler equation \eqref{dampedeqne} to persist globally in time, or alternatively, for the finite-time blowup of the solution $\gamma^E$ of the undamped Euler equation \eqref{undampedgammaeqn} to be suppressed. 

\begin{theorem}
\label{dampedeulerthm}
Consider the damped Euler equation \eqref{dampedeqne} with smooth mean-zero initial data $\gamma_0$ and periodic boundary conditions. Suppose $\gamma_0$ satisfies the conditions of Theorem \ref{blowupundamped}, so the solution $\gamma^E$ of the undamped Euler equation \eqref{undampedgammaeqn} blows up at a finite time $T^E$. Then for $\alpha\geq1/T^E$, the solution $\gamma^E_{\alpha}$ of the damped Euler equation \eqref{dampedeqne} exists globally in time. More particularly, for $\alpha=1/T^E$, $\gamma^E_{\alpha}$ converges to a non-trivial steady state as $t\to+\infty$, whereas, for  $\alpha>1/T^E$, convergence is to the trivial steady state. In contrast, if $0<\alpha<1/T^E$, then there exists a finite time $T_{\alpha}^E>T^E$ such that the maximum and minimum values of $\gamma^E_{\alpha}$ diverge to positive and respectively negative infinity as $t\nearrow T_{\alpha}^E$. More particularly, let $\tau^*=-1/m_0$ for $m_0$ the negative minimum of $\gamma_0$ attained at a finite number of points in $Q$, and set $t_{\alpha}^E(\tau)=-\frac{1}{\alpha}\ln\left|1-\alpha t^E(\tau)\right|$ for $t^E(\tau)$ the undamped Euler time variable  \eqref{undampedeulertime}. Then for $0<\alpha<1/T^E$, the finite blowup time is $T_{\alpha}^E\equiv\lim_{\tau\nearrow\tau^*}t_{\alpha}^E(\tau)$.
\end{theorem}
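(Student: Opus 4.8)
The plan is to reduce the damped Euler equation \eqref{dampedeqne} to Constantin's undamped equation \eqref{undampedgammaeqn} through an exponential rescaling of $\gamma$ together with a nonlinear reparametrization of time, and then read off the global behavior of $\gamma^E_{\alpha}$ directly from Theorem \ref{blowupundamped}. Along each Lagrangian trajectory $t\mapsto\bfY(\bfa,t)$ of the horizontal velocity $\bfu'$, equation \eqref{dampedeqne} becomes the Riccati ODE $\dot g=-g^2-\alpha g+I(t)$ for $g(t)=\gamma^E_{\alpha}(\bfY(\bfa,t),t)$, with $I(t)=2\int_Q(\gamma^E_{\alpha})^2\,d\bfx$ common to all trajectories. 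I would substitute
\begin{equation*}
g(t)=e^{-\alpha t}\,G(s),\qquad s=s(t)=\frac{1-e^{-\alpha t}}{\alpha},
\end{equation*}
where the decaying amplitude $e^{-\alpha t}$ absorbs the linear damping and $s'(t)=e^{-\alpha t}$ rescales time; a short computation turns the damped Riccati ODE into the undamped one, $G_s=-G^2+\tilde I(s)$, with $\tilde I(s)=e^{2\alpha t}I(t)$.

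The next step is to verify that the velocity, the Lagrangian measure, and the nonlocal coupling are all consistent with this change of variables, so that $G$ genuinely solves the undamped problem with the same data $\gamma_0$. Checking the horizontal momentum equation, one finds that the entire ansatz rescales compatibly, namely $\bfu'_{\alpha}(\bfx,t)=e^{-\alpha t}\,\bfu'(\bfx,s(t))$; hence the damped and undamped trajectories coincide after reparametrization, and the Jacobian of the (non--measure-preserving, $\diver\bfu'=-\gamma$) flow transforms accordingly. Converting $I(t)=2\int_Q(\gamma^E_{\alpha})^2\,d\bfx$ to label coordinates then gives $I(t)=e^{-2\alpha t}\tilde I(s)$ with $\tilde I(s)=2\int_Q G^2\,d\bfx$ the undamped nonlocal term, so $G$ solves \eqref{undampedgammaeqn}, i.e. $G=\gamma^E$ and
\begin{equation*}
\gamma^E_{\alpha}(\,\cdot\,,t)=e^{-\alpha t}\,\gamma^E(\,\cdot\,,s(t)),\qquad s(t)=\frac{1-e^{-\alpha t}}{\alpha}.
\end{equation*}
This is the solution formula of Section \ref{solution}. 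Inverting $s(t)$ gives $t=-\tfrac1\alpha\ln(1-\alpha s)$, and since $s$ equals the undamped Euler time \eqref{undampedeulertime} when expressed through the common inner variable $\tau$ of \eqref{undampedtau}, this yields $t^E_{\alpha}(\tau)=-\tfrac1\alpha\ln|1-\alpha t^E(\tau)|$.

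With the reduction in hand, the trichotomy follows by tracking the prefactor $e^{-\alpha t}=1-\alpha s$ against the undamped blowup at $s=T^E$ as $\tau\nearrow\tau^*$, using that $t^E(\tau)$ is strictly increasing and $t\mapsto s$ is strictly increasing for $s<1/\alpha$. If $0<\alpha<1/T^E$, then $s$ reaches $T^E<1/\alpha$ in finite damped time $T^E_{\alpha}=\lim_{\tau\nearrow\tau^*}t^E_{\alpha}(\tau)=-\tfrac1\alpha\ln(1-\alpha T^E)$, the prefactor $1-\alpha s\to1-\alpha T^E>0$ stays bounded away from zero, and so $\gamma^E_{\alpha}=(1-\alpha s)\gamma^E$ inherits from Theorem \ref{blowupundamped} the divergence of the maximum to $+\infty$ and the minimum to $-\infty$; the elementary bound $-\ln(1-u)>u$ on $(0,1)$, applied to $u=\alpha T^E$, gives $T^E_{\alpha}>T^E$. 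If $\alpha>1/T^E$, then $s$ can increase only up to $1/\alpha<T^E$, attained as $t\to+\infty$ (equivalently $t^E_{\alpha}(\tau)\to+\infty$ at the $\tau$ with $t^E(\tau)=1/\alpha$), so $\gamma^E$ remains bounded while $e^{-\alpha t}\to0$: the solution is global and decays to the trivial steady state.

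The delicate case, which I expect to be the main obstacle, is the critical threshold $\alpha=1/T^E$. Here $s\nearrow1/\alpha=T^E$ exactly as $t\to+\infty$, so $\gamma^E_{\alpha}=(1-\alpha s)\,\gamma^E(\,\cdot\,,s)$ is an indeterminate product of a prefactor $1-\alpha s\to0$ and a factor tending to $\pm\infty$, and one must show the limit exists and is a nontrivial steady state rather than $0$ or $\pm\infty$. Establishing this requires the precise competing rates as $\tau\nearrow\tau^*$: invoking the nondegeneracy hypotheses on $\gamma_0$ (finitely many minima with nonvanishing second derivatives), a two-dimensional Laplace/Morse-type expansion of the integrals $\int_Q(1+\gamma_0\tau)^{-k}\,d\bfa$ appearing in \eqref{undampedsln} shows that $\gamma^E$ diverges at exactly the rate at which $1-\alpha s=\alpha(T^E-s)$ vanishes, so the product $(1-\alpha s)\gamma^E$ converges to a finite, nonzero limit. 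Verifying that this limit is an exact steady state of \eqref{dampedeqne} and controlling the convergence uniformly in $\bfx$ is the technical heart of the argument, the other two cases being comparatively immediate once the reduction is in place.
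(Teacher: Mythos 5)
Your proposal is correct, but it reaches the theorem by a genuinely different route than the paper. The paper first derives (Section \ref{solution}, via the Jacobian ODE $\xi_{tt}+\alpha\xi_t-I(t)\xi=\rho_0$) the explicit representation \eqref{gammae}--\eqref{tauivpe}, and then proves the trichotomy by inserting the asymptotics \eqref{int1}--\eqref{int2} and the time relation \eqref{time2} into that formula; in particular, for $0<\alpha<1/T^E$ it recomputes the blowup rates $\gamma^E_{\alpha}\sim -C/\left((\tau^*-\tau_{\alpha})\ln^2(\tau^*-\tau_{\alpha})\right)$ at $\bfa_0$ and $\sim -C/\left((\tau^*-\tau_{\alpha})\ln^3(\tau^*-\tau_{\alpha})\right)$ elsewhere. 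You instead exhibit the damping-removal symmetry $\gamma^E_{\alpha}(\bfx,t)=e^{-\alpha t}\gamma^E(\bfx,s(t))$, $s(t)=(1-e^{-\alpha t})/\alpha$, which is exactly equivalent to the paper's formulas (your $s$ is the undamped time $t^E$ expressed through the common variable $\tau$, and inverting $s(t)$ is \eqref{time2}), but which lets you import Theorem \ref{blowupundamped} wholesale: for $0<\alpha<1/T^E$ the prefactor tends to $1-\alpha T^E>0$, so divergence of the max and min is inherited with no further integral estimates, and for $\alpha>1/T^E$ the undamped solution is only ever evaluated on $[0,1/\alpha]\subset[0,T^E)$, so global existence and decay to zero are immediate. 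Your route and the paper's truly coincide only in the critical case $\alpha=1/T^E$, where, as you correctly identify, the rate comparison cannot be avoided: the paper's \eqref{asymp} gives $T^E-t^E\sim(\tau^*-\tau)(\ln(\tau^*-\tau)-1)^2$, which cancels the divergence of $\gamma^E$ exactly at the minimizers. Two small corrections there: your phrase ``converges to a finite, nonzero limit'' is accurate only at the minimum points $\bfa_0$; away from them $\gamma^E$ diverges one logarithm more slowly, so the product tends to $0$ --- the paper's steady state is precisely $-C$ at $\bfa_0$ and zero elsewhere (it does not, incidentally, verify that this limit is an exact steady solution of \eqref{dampedeqne}, only that the Lagrangian limit exists). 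Also, your identification $G=\gamma^E$ tacitly uses uniqueness of smooth solutions of \eqref{undampedgammaeqn} with data $\gamma_0$; this is standard and implicit in \cite{constantin2}, but deserves a sentence. What your approach buys is economy and conceptual clarity in the two noncritical cases; what the paper's approach buys is the representation formula itself, which it needs anyway for the Boussinesq result (Theorem \ref{dampedboussinesqthm}) and for the critical-case rates.
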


Before summarizing our next result, we note that Gibbon and Ohkitani \cite{gibbon3} established a regularity criterion of BKM \cite{BKM} type whereby a solution $\gamma^E$ of \eqref{undampedgammaeqn} blows up at a finite time $T^E$ if and only if
\begin{equation}
\label{bkm}
\int_0^{T^E}{\|\gamma^E(\bfx,s)\|_{\infty}}ds=+\infty
\end{equation}
for $\|\cdot\|_{\infty}$ the supremum norm. Let $(\bfu, p)$ be a solution of the damped Euler system \eqref{dampede} for $\bfu$ as in \eqref{ansatz}i). Since the vertical component $\omega=v_x-u_y$ of the vorticity $\boldsymbol{\omega}=\curl\bfu$ satisfies
\begin{equation*}
\label{dampedvorticityeqn}
\omega_t+\bfu'\cdot\grad\omega=(\gamma_{\alpha}^E-\alpha)\omega,
\end{equation*}
we may refer to $\gamma_{\alpha}^E$ as the vorticity stretching rate. In the spirit of \eqref{bkm}, a similar BKM-type criterion can be established for a solution of the damped Euler equation \eqref{dampedeqne} where the blowup
time $T_{\alpha}^E$ is defined as the smallest time at which
$$\int_0^{T_{\alpha}^E}{\|\gamma_{\alpha}^E(\bfx,s)\|_{\infty}}ds=+\infty.$$
Additionally, the argument used to prove Theorem 1.1 in \cite{sarria2}, together with Theorem  \ref{dampedboussinesqthm} below, shows that the same regularity criterion\footnote[1]{With $\gamma_{\alpha}^E$ replaced by $\gamma_{\alpha}^B$} is true for a solution of the damped Boussinesq system \eqref{dampedeqnb}-\eqref{nonlocalb}. Part one of our next Theorem shows that for smooth $\gamma_0$ satisfying the conditions of Theorem \ref{blowupundamped} and $\rho_0\geq0$, the existence of a finite blowup time $T^E$ for the undamped Euler equation \eqref{undampedgammaeqn} leads to finite-time blowup of the damped Boussinesq system \eqref{dampedeqnb}-\eqref{nonlocalb} if the damping coefficient satisfies $0<\alpha<1/T^E$ and there is at least one point $\bfa_1\in Q$ such that $\gamma_0(\bfa_1)=m_0$ and $\rho_0(\bfa_1)=0$. In particular, we find that the time integral of $\gamma_{\alpha}^B$ diverges to \emph{negative infinity} for $\bfa=\bfa_1$, in agreement with the aforementioned BKM-type criterion. The second part of Theorem \ref{dampedboussinesqthm} shows that finite-time blowup is not restricted to nonnegative $\rho_0$, although the blowup mechanism we uncover for nonpositive $\rho_0$ is of a different nature and the singularity is effectively one dimensional. Briefly, we show the existence of smooth $\gamma_0$ attaining its negative minimum $m_0$ at infinitely many points $\bfa'\in Q$, and $\rho_0\leq0$ satisfying $\rho_0(\bfa')\neq0$, such that for $T^B>0$ the finite blowup time of the associated undamped Boussinesq system (i.e. \eqref{dampedeqnb}-\eqref{nonlocalb} with $\alpha=0$), if $0<\alpha<1/T^B$, then the time integral of $\gamma_{\alpha}^B$ diverges to \emph{positive infinity} for $\bfa\neq\bfa'$. 


\begin{theorem}
\label{dampedboussinesqthm}
Consider the damped Bousinesq system \eqref{dampedeqnb}-\eqref{nonlocalb} with periodic boundary conditions.

\begin{enumerate}

\item Suppose $\gamma_0$ satisfies the conditions of Theorem \ref{blowupundamped}, so the solution $\gamma^E$ of the undamped Euler equation \eqref{undampedgammaeqn} blows up at a finite time $T^E$. Further, let $0<\alpha<1/T^E$, so that by Theorem \ref{dampedeulerthm} the solution $\gamma_{\alpha}^E$ of the damped Euler equation \eqref{dampedeqne} blows up at a finite time $T_{\alpha}^E$. Assume $\gamma_0$ attains its negative minimum $m_0$ at finitely many points $\bfa_i\in Q$, $1\leq i\leq n$, and $\rho_0\geq0$ is smooth with $\rho_0(\bfa_j)=0$ for some $1\leq j\leq n$. Then there exists a finite time $T_{\alpha}^B$, satisfying $0<T_{\alpha}^B<T_{\alpha}^E$, such that 
$$J(\bfa_j,t)= \operatorname{exp}\,\left\{-\int_0^t{\gamma_{\alpha}^B(\bfX(\bfa_j,s),s)ds}\right\}\to+\infty$$
as $t\nearrow T_{\alpha}^B$ for $J=\det\left\{\frac{\partial\bfX}{\partial\bfa}\right\}$ the Jacobian of the 2D flow-map (see \eqref{dampedflowmap}).  

\vspace{0.05in}

\item There exist smooth mean-zero initial data $\gamma_0$ attaining its negative minimum $m_0$ at infinitely many points $\bfa'\in Q$, and smooth $\rho_0\leq0$ with $\rho_0(\bfa')\neq0$, such that for $T^B>0$ the finite blowup time of the associated undamped Boussinesq system, if $0<\alpha<1/T^B$ and $\bfa\neq\bfa'$, then 
$$J(\bfa,t)= \operatorname{exp}\,\left\{-\int_0^t{\gamma_{\alpha}^B(\bfX(\bfa,s),s)ds}\right\}\to0$$
as $t\nearrow T_{\alpha}^B=-\frac{1}{\alpha}\ln\left(1-\alpha T^B\right)>T^B$. 
\end{enumerate}

\end{theorem}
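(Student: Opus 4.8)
The plan is to reduce both parts to the Lagrangian dynamics along the $2$D flow-map $\bfX$ defined in \eqref{dampedflowmap} and to the linear second-order ODE that governs its Jacobian $J$. First I would note that $\gamma_\alpha^B=-\diver\bfu'$ gives $\frac{dJ}{dt}=-\gamma_\alpha^B J$ along characteristics, so that $w:=1/J$ satisfies $\dot w=\gamma_\alpha^B w$. Differentiating once more and inserting the temperature equation in the integrated form $\rho(\bfX(\bfa,t),t)=\rho_0(\bfa)J(\bfa,t)=\rho_0(\bfa)/w$ (immediate from $\dot\rho=-\gamma_\alpha^B\rho$ along $\bfX$), the first equation of \eqref{dampedeqnb} collapses to the \emph{linear}, $\bfa$-parametrized ODE
\begin{equation*}
\ddot w+\alpha\dot w-I(t)\,w=\rho_0(\bfa),\qquad w(\bfa,0)=1,\quad \dot w(\bfa,0)=\gamma_0(\bfa),
\end{equation*}
in which the nonlocal coefficient $I(t)$ from \eqref{nonlocalb} is common to every characteristic. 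Hence $w=\psi_1(t)+\gamma_0(\bfa)\psi_2(t)+\rho_0(\bfa)\psi_3(t)$ for three $\bfa$-independent functions determined by $I(t)$, and the mean-zero constraint \eqref{zeromean} integrates to $\int_Q J\,d\bfa\equiv1$, which closes the system for $I(t)$. This is precisely the content of the solution formulae \eqref{gammab}--\eqref{tauivpe} derived in Section \ref{solution}, which I will use freely.

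For part (1), the decisive observation is that at a minimum point $\bfa_j$ with $\rho_0(\bfa_j)=0$ the forcing drops out, so $w(\bfa_j,t)=\psi_1(t)+m_0\psi_2(t)$ solves the \emph{homogeneous} equation underlying the damped Euler problem of Theorem \ref{dampedeulerthm}, and $J(\bfa_j,t)\to+\infty$ exactly when this denominator decreases to $0$. I would then establish two things: that the denominator reaches $0$ in finite time, and that it does so strictly before $T_\alpha^E$. The first follows from the Riccati equation $\dot g=-g^2-\alpha g+I(t)$ for $g=\gamma_\alpha^B(\bfX(\bfa_j,\cdot),\cdot)$ with $g(0)=m_0<0$, forcing $g\to-\infty$ in finite time. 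For the comparison I would exploit the single structural difference between the Boussinesq and damped-Euler dynamics felt at $\bfa_j$, namely the nonlocal term: since $\rho_0\geq0$ forces $\rho=\rho_0 J\geq0$ up to blowup, \eqref{nonlocalb} gives a nonpositive correction $-\int_Q\rho\,d\bfx$ to $I(t)$, which pushes $g$ toward $-\infty$ faster than in the purely Euler case. Making this precise through the explicit blowup-time integral $t_\alpha^B(\tau)$ built from \eqref{gammab}--\eqref{tauivpe} and comparing it against $t_\alpha^E(\tau)$ as $\tau\nearrow\tau^*=-1/m_0$ yields the finite time $T_\alpha^B=\lim_{\tau\nearrow\tau^*}t_\alpha^B(\tau)$ with $0<T_\alpha^B<T_\alpha^E$.

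For part (2), I would instead \emph{design} the data so that the blowup is driven by $w\to+\infty$ (equivalently $J\to0$, $\int_0^t\gamma_\alpha^B\to+\infty$). Taking $\gamma_0$ and $\rho_0$ to depend on a single cross-sectional variable makes the minimum set a full periodic line $\bfa'$ (infinitely many points) and renders the dynamics effectively one-dimensional, so that \eqref{gammab}--\eqref{tauivpe} can be analyzed explicitly. With $\rho_0\leq0$ and $\rho_0(\bfa')\neq0$, I would first identify the finite blowup time $T^B$ of the undamped ($\alpha=0$) system, at which the fundamental solutions force $w(\bfa,t)\to+\infty$ off the minimum set, and then transport this to the damped problem through the same time change $\sigma=\frac1\alpha(1-e^{-\alpha t})$ underlying the passage from the undamped to the damped Euler blowup times in Theorem \ref{dampedeulerthm}. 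Under this substitution the condition $0<\alpha<1/T^B$ guarantees that $\sigma$ reaches $T^B$ before saturating at $1/\alpha$, producing blowup at the stated $T_\alpha^B=-\frac1\alpha\ln(1-\alpha T^B)>T^B$ with $J(\bfa,t)\to0$ for every $\bfa\neq\bfa'$.

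The main obstacle I anticipate is the comparison step in part (1): because $I(t)$ is determined self-consistently through $\int_Q J\,d\bfa\equiv1$, the integrals $\int_Q(\gamma_\alpha^B)^2\,d\bfx$ and $\int_Q(\gamma_\alpha^E)^2\,d\bfx$ are not pointwise ordered a priori, so establishing the strict inequality $T_\alpha^B<T_\alpha^E$ requires controlling the feedback of the nonpositive buoyancy correction on $I(t)$ rather than a naive term-by-term bound on the blowup-time integrals. A secondary difficulty is verifying that the one-dimensional data in part (2) genuinely produces the $w\to+\infty$ collapse (and not a $w\to0$ blowup) off the degenerate minimum set.
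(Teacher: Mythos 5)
Your reduction to the linear ODE for $w=1/J$ and the representation formulas of Section \ref{solution} matches the paper, but both of your core mechanisms have genuine gaps. In part (1), you assert that the denominator at $\bfa_j$ vanishes in finite time because the Riccati equation $\dot g=-g^2-\alpha g+I(t)$ with $g(0)=m_0<0$ ``forces $g\to-\infty$ in finite time.'' It does not: the damped Euler equation \eqref{dampedeqne} has exactly this structure along the characteristic through the minimum, with the same datum $m_0<0$, yet by Theorem \ref{dampedeulerthm} its solution is global whenever $\alpha\geq1/T^E$. Blowup cannot follow from the Riccati form alone; it must use the nonlocal term and the hypothesis $0<\alpha<1/T^E$, which your argument never invokes at this step. (Relatedly, your $\psi_1,\psi_2$ are built from the Boussinesq coefficient $I(t)$ of \eqref{nonlocalb}, not the Euler one, so they do not solve ``the homogeneous equation underlying the damped Euler problem,'' and Theorem \ref{dampedeulerthm} says nothing about them directly.) The comparison you then flag as your main obstacle---ordering the two self-consistent nonlocal terms---is indeed intractable, but the paper never attempts it. It works in the $\tau_\alpha$ variable: Lemma \ref{lemma}, resting only on $\rho_0\geq0$ and $\sigma(t)\leq0$ (clear from \eqref{sigma}), gives $0<\phi_1(t)\leq\int_Q\frac{d\bfa}{\Psi(\bfa,t)}$ for $0\leq\tau_\alpha<\tau^*$. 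This single pointwise bound does both jobs: inserted in \eqref{dampedjacobianb} it yields the diverging lower bound on $J(\bfa_j,t)$ as $\tau_\alpha\nearrow\tau^*$, and inserted in \eqref{tauivpb}, i.e. $e^{-\alpha t}\,dt=\phi_1^2\,d\tau_\alpha$, it yields $t_\alpha^B(\tau_\alpha)\leq t_\alpha^E(\tau_\alpha)$, so that $T_\alpha^B\leq T_\alpha^E<\infty$ precisely because $\alpha<1/T^E$. The sign of $\sigma$ is the entire resolution of the feedback problem you raise; your proposal does not find it.

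In part (2), your plan to transport the undamped blowup through the time change $s=\frac{1}{\alpha}(1-e^{-\alpha t})$ is false as a statement about the system: under this substitution $\xi_{tt}+\alpha\xi_t=(1-\alpha s)^2\xi_{ss}$, so \eqref{ode} becomes $\xi_{ss}-\frac{I}{(1-\alpha s)^2}\,\xi=\frac{\rho_0}{(1-\alpha s)^2}$, and the buoyancy forcing acquires a time-dependent amplification. The exact damped/undamped correspondence under a change of clock holds only when $\rho_0\equiv0$, i.e. in the Euler case of Theorem \ref{dampedeulerthm}; for Boussinesq it is not available. The paper instead closes the one-dimensional construction explicitly: with $\rho_0=-\sin^2(2\pi x)$, the normalization \eqref{mean1} forces $\mu_2=\mu_1-1/\mu_1$ (hence $\gamma_0=\cos(4\pi x)$); eliminating $I(t)$ gives \eqref{i}, and integrating with the initial values yields $N'-\frac12N^2+\alpha N=\alpha\int_0^tN^2\,ds\geq0$, which is what produces the differential inequality \eqref{z0} for $z=e^{\alpha t}N$. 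Only at that point does the rescaled time appear, and only as an \emph{inequality} whose validity rests on the favorable sign of the damping correction $\alpha\int_0^t N^2\,ds$. Your outline omits the constraint step and the elimination of $I(t)$---the steps that make the reduction closed---and treats as automatic exactly the comparison that requires this sign check; it also leaves your own ``secondary difficulty'' (that $w\to+\infty$ off the minimum lines, settled in the paper by the bound $1/J\geq\cos^2(2\pi x)\mu_1+\sin^2(2\pi x)/\mu_1$) unresolved.
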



%

\section{Solution along characteristics}
\label{solution}

In this section we derive the representation formula \eqref{gammab}-\eqref{tauivpb} for the general solution $(\gamma_{\alpha}^B(\bfx,t),\rho_{\alpha}(\bfx,t))$ (along characteristics) of the damped Boussinesq system  \eqref{dampedeqnb}-\eqref{nonlocalb}. Our approach is a natural generalization to higher dimensions of the arguments in \cite{sarria1, sarria2}, and is somewhat more direct than the argument used in \cite{constantin2} to derive \eqref{undampedsln}-\eqref{undampedtau}. 

For $(\bfa,t)\equiv(a_1,a_2,t)$, $\bfa\in Q$, and $\bfu'=(u,v)$, define the 2D flow-map $\bfa\to\bfX(\bfa,t)$ as the solution of the IVP
\begin{equation}
\label{dampedflowmap}
\frac{d\mathbf{X}}{dt}=\bfu'(\bfX(\bfa,t),t),\qquad \mathbf{X}(\bfa,0)=\bfa.
\end{equation}
Integrating equation \eqref{dampedeqnb}ii) along the flow-map yields
\begin{equation}
\label{eq1}
\rho_{\alpha}(\bfX(\bfa,t),t)=\rho_0(\bfa)\operatorname{exp}\,\left\{-\int_0^t{\gamma_{\alpha}^B(\bfX(\bfa,s),s)\,ds}\right\}.
\end{equation}
Note that 
\begin{equation}
\label{rho}
\rho_{\alpha}(\bfX(\bfa,t),t)=\rho_0(\bfa)J(\bfa,t)
\end{equation}
for $J(\bfa,t)=\det\left\{\frac{\partial\bfX}{\partial\bfa}\right\}$ the Jacobian determinant of $\bfX$ satisfying
\begin{equation}
\label{jac}
J(\bfa,t)=\text{exp}\,\left\{-\int_0^t{\gamma_{\alpha}^B(\bfX(\bfa,s),s)\,ds}\right\}.
\end{equation}
Formula \eqref{rho} follows directly from \eqref{eq1} and the fact that the Jacobian satisfies
\begin{equation}
\label{jacid}
J_t(\bfa,t)=-J(\bfa,t)\gamma_{\alpha}^B(\bfX(\bfa,t),t),\qquad J(\bfa,0)\equiv1.
\end{equation}
Next we use the above formulas to derive a second-order linear ODE for $\xi=J^{-1}$. 

From \eqref{dampedeqnb}i), \eqref{rho}, and \eqref{jacid}, it follows that
\begin{equation}
\label{eq2}
\begin{split}
\frac{\partial}{\partial t}(\gamma_{\alpha}^B(\bfX(\bfa,t),t))=\rho_0(\bfa)J(\bfa,t)-\left(\frac{J_t(\bfa,t)}{J(\bfa,t)}\right)^2+\alpha\frac{J_t(\bfa,t)}{J(\bfa,t)}+I(t).
\end{split}
\end{equation}
Then differentiating \eqref{jacid} with respect to $t$, and using \eqref{jacid} and \eqref{eq2} on the resulting expression leads to
\begin{equation}
\label{eq3}
\begin{split}
J_{tt}&=\left(2\frac{J_t^2}{J^2}-\rho_0J-\alpha\frac{J_t}{J}-I(t)\right)J.
\end{split}
\end{equation}
Setting $\xi=J^{-1}$ in \eqref{eq3} yields, after some rearranging, 
\begin{equation}
\label{ode}
\xi_{tt}(\bfa,t)+\alpha\xi_t(\bfa,t)-I(t)\xi(\bfa,t)=\rho_0(\bfa),
\end{equation}
a second-order linear ODE parametrized by $\bfa\in Q$. Fix $\bfa\in Q$ and set $f(t)=\xi(\bfa,t)$. Then by \eqref{jacid} and \eqref{ode}, $f$ solves the IVP
\begin{equation}
\label{ode2}
f''(t)+\alpha f'(t)-I(t)f(t)=\rho_0,\qquad f(0)=1,\,\,\, f'(0)=\gamma_0(\bfa)
\end{equation}
for ${}^{'}\equiv\frac{d}{dt}$. To find a general solution of \eqref{ode2} we follow a standard variation of parameters argument. Consider the associated homogeneous equation
\begin{equation}
\label{homo}
f_h''(t)+\alpha f_h'(t)-I(t)f_h(t)=0.
\end{equation}
Let $\phi_1(t)$ and $\phi_2(t)$ be two linearly independent solutions of \eqref{homo} satisfying $\phi_1(0)=\phi_2'(0)=1$ and $\phi_1'(0)=\phi_2(0)=0$. Then by reduction of order, the general solution of \eqref{homo} takes the form $f_h(t)=\phi_1(t)(c_1(\bfa)+c_2(\bfa)\tau_{\alpha}(t))$ for
\begin{equation*}
\label{tau}
\tau_{\alpha}(t)=\int_0^t{\frac{e^{-\alpha s}}{\phi_1^2(s)}\,ds}.
\end{equation*}
Now consider a particular solution  $f_p(t)=v_1(\bfa,t)\phi_1(t)+v_2(\bfa,t)\phi_2(t)$ of \eqref{ode2} for $\phi_2(t)=\tau_{\alpha}(t)\phi_1(t)$ and $v_1$ and $v_2$ to be determined. After some standard computations, we can write the general solution of \eqref{ode2} as
\begin{equation}
\label{particular}
\xi(\bfa,t)=\phi_1(t)\left[\Psi(\bfa,t)-\rho_0(\bfa)\sigma(t)\right]
\end{equation}
for 
\begin{equation}
\label{psi}
\begin{split}
\Psi(\bfa,t)=1+\gamma_0(\bfa)\tau_{\alpha}(t)
\end{split}
\end{equation}
and
\begin{equation}
\label{sigma}
\begin{split}
\sigma(t)&=\int_0^t{e^{\alpha s}\tau_{\alpha}(s)\phi_1(s)ds}-\tau_{\alpha}(t)\int_0^t{e^{\alpha s}\phi_1(s)ds}
\\
&=
-\int_0^t\int_s^t{\frac{\phi_1(s)}{\phi_1^2(z)}e^{\alpha(s-z)}dzds}.
\end{split}
\end{equation}
The Jacobian is now obtained from \eqref{particular} and $\xi=J^{-1}$ as
\begin{equation}
\label{dampedjacobianb0}
J(\bfa,t)=\frac{\phi_1^{-1}(t)}{\Psi(\bfa,t)-\rho_0(\bfa)\sigma(t)}.
\end{equation}
To find $\phi_1$ note that for fixed $\bfa\in Q$ and $\mathbf{c}\in\mathbb{Z}^2$, the IVP
\begin{equation}
\label{flow0}
\mathbf{Z}'(t)=\bfu'(\mathbf{Z}(t),t),\qquad \mathbf{Z}(0)=\bfa+\mathbf{c}
\end{equation}
has a unique solution as long as $\bfu'=(u,v)$ stays smooth. Then by periodicity of $\bfu'$ and \eqref{dampedflowmap}, $\mathbf{Z}_1(t)=\bfX(\bfa,t)+\mathbf{c}$ and $\mathbf{Z}_2(t)=\bfX(\bfa+\mathbf{c},t)$ both solve \eqref{flow0} with the same initial condition. Thus $\bfX(\bfa,t)+\mathbf{c}=\bfX(\bfa+\mathbf{c},t)$, which implies that
\begin{equation}
\label{mean1}
\int_Q{J(\bfa,t)d\bfa}\equiv1.
\end{equation}
Integrating \eqref{dampedjacobianb0} now yields
\begin{equation}
\label{phib}
\phi_1(t)=\int_Q{\frac{d\bfa}{\Psi(\bfa,t)-\rho_0(\bfa)\sigma(t)}}.
\end{equation}
For $i\in\mathbb{Z}^+$, set  
\begin{equation*}
\label{kgamma}
\mathcal{K}_i(\bfa,t)=\frac{\gamma_0(\bfa)}{(\Psi(\bfa,t)-\rho_0(\bfa)\sigma(t))^i}\,,\qquad
\mathcal{\bar K}_i(t)=\int_Q{\frac{\gamma_0(\bfa)}{(\Psi(\bfa,t)-\rho_0(\bfa)\sigma(t))^i}d\bfa}
\end{equation*}
and 
\begin{equation*}
\label{krho}
\mathcal{L}_i(\bfa,t)=\frac{\rho_0(\bfa)}{(\Psi(\bfa,t)-\rho_0(\bfa)\sigma(t))^i}\,,\qquad
\mathcal{\bar L}_i(t)=\int_Q{\frac{\rho_0(\bfa)}{(\Psi(\bfa,t)-\rho_0(\bfa)\sigma(t))^i}d\bfa}.
\end{equation*}
Then using \eqref{rho}, \eqref{jacid}, \eqref{dampedjacobianb0} and \eqref{phib} we obtain, after a lengthy but straight-forward computation, the solution formula
\begin{equation}
\label{gammab}
\begin{split}
\gamma^B_{\alpha}(\bfX(\bfa,t),t)=\tau_{\alpha}'(t)\left(\mathcal{K}_1(\bfa,t)-\frac{\mathcal{\bar K}_2(t)}{\phi_1(t)}\right)+
\left(\mathcal{L}_1(\bfa,t)-\frac{\mathcal{\bar L}_2(t)}{\phi_1(t)}\right)\int_0^t{e^{\alpha s}\phi_1(s)ds}
\end{split}
\end{equation}
for $\tau_{\alpha}(t)$ satisfying 
\begin{equation}
\label{tauivpb}
\tau_{\alpha}'(t)=\frac{e^{-\alpha t}}{\phi_1^{2}(t)},\qquad \tau_{\alpha}(0)=0.
\end{equation}
Moreover, the Jacobian \eqref{dampedjacobianb0} becomes
\begin{equation}
\label{dampedjacobianb}
J(\bfa,t)=\frac{1}{\Psi(\bfa,t)-\rho_0(\bfa)\sigma(t)}\left(\int_Q{\frac{d\bfa}{\Psi(\bfa,t)-\rho_0(\bfa)\sigma(t)}}\right)^{-1}.
\end{equation}


Since \eqref{rho} implies that if $\rho_0\equiv0$, then $\rho_{\alpha}\equiv0$ for as long as the solution is defined, setting $\rho_0\equiv0$ in \eqref{gammab}-\eqref{tauivpb} yields, after some rearranging, the general solution of the damped Euler equation \eqref{dampedeqne} as
\begin{equation}
\label{gammae}
\begin{split}
\gamma^E_{\alpha}(\bfX(\bfa,t),t)=-\frac{\tau_{\alpha}'(t)}{\tau_{\alpha}(t)}\left\{\frac{1}{1+\gamma_0(\bfa)\tau_{\alpha}(t)}-\left(\int_Q{\frac{d\bfa}{1+\gamma_0(\bfa)\tau_{\alpha}(t)}}\right)^{-1}\int_Q{\frac{d\bfa}{(1+\gamma_0(\bfa)\tau_{\alpha}(t))^2}}\right\}
\end{split}
\end{equation}
for 
\begin{equation}
\label{tauivpe}
\tau_{\alpha}'(t)=e^{-\alpha t}\left(\int_Q{\frac{d\bfa}{1+\gamma_0(\bfa)\tau_{\alpha}(t)}}\right)^{-2},\qquad \tau_{\alpha}(0)=0.
\end{equation} 

\section{Proof of the Main Theorems}
\label{proofs}

The blowup result in Theorem \ref{blowupundamped} for the solution $\gamma^E$ of the undamped Euler equation \eqref{undampedgammaeqn} is established by estimating blowup rates for the integral terms in \eqref{undampedsln} under the assumption that the smooth initial data $\gamma_0$ behaves quadratically near the points where its minimum is attained \cite{constantin2}. Since we are interested in how damping can arrest this blowup, we consider the same class of initial data. In particular, this means that the blowup rates derived in \cite{constantin2} for the integral terms also hold here; however, and for convenience of the reader, we outline how to obtain these estimates in the proof of Theorem \ref{dampedeulerthm} below. 

\begin{proof}[\textbf{Proof of Theorem \ref{dampedeulerthm}}]

Suppose the mean-zero initial data  $\gamma_0(\bfa)$ is smooth and attains its negative minimum $m_0$ at a finite number of locations $\bfa_0\in Q$. Then the spatial term  
\begin{equation}
\label{spatial}
\frac{1}{1+\gamma_0(\bfa)\tau_{\alpha}(t)}
\end{equation}
in \eqref{gammae} diverges to positive infinity when $\bfa=\bfa_0$ as $\tau_{\alpha}$ approaches  $$\tau^*=-\frac{1}{m_0},$$
and remains finite and positive for all $0\leq\tau\leq\tau^*$ and $\bfa\neq\bfa_0$. Suppose $\gamma_0$ has nonzero second-order partials near $\bfa_0$, so that locally,
\begin{equation*}
\label{approx}
m_0+\frac{1}{2}\lambda_2|\bfa-\bfa_0|^2\leq \gamma_0(\bfa)\leq 
m_0+\frac{1}{2}\lambda_1|\bfa-\bfa_0|^2
\end{equation*}
for $0\leq|\bfa-\bfa_0|\leq r$, $r>0$ small, and $\lambda_1>\lambda_2>0$ the eigenvalues of the Hessian matrix of $\gamma_0$ at $\bfa_0$.\footnote[2]{If $\lambda_1=\lambda_2=\lambda$, then near $\bfa_0$,  $\gamma_0\sim m_0+\frac{1}{2}\lambda|\bfa-\bfa_0|^2$ and the blowup rates \eqref{int1}- \eqref{int2} still hold.} This implies that
\begin{equation}
\label{approx2}
C_1\ln\left(1+\frac{\lambda_1 r^2}{2\epsilon}\right)\leq
\int_{\mathcal{D}}\frac{d\bfa}{\epsilon+\gamma_0(\bfa)-m_0}
\leq
C_2\ln\left(1+\frac{\lambda_2 r^2}{2\epsilon}\right)
\end{equation}
for $\epsilon>0$ small, $C_i=\frac{2\pi}{\lambda_i}>0$, $i=1, 2$, and $\mathcal{D}$ the disk centered at $\bfa_0$ of radius $r$. Setting $\epsilon=\frac{1}{\tau_{\alpha}}+m_0$ in \eqref{approx2}, it follows that
\begin{equation}
\label{int1}
\int_{Q}\frac{d\bfa}{1+\gamma_0(\bfa)\tau_{\alpha}}\sim -C\ln(\tau^*-\tau_{\alpha})
\end{equation}
for $\tau^*-\tau_{\alpha}>0$ small. By a similar argument,
\begin{equation}
\label{int2}
\int_{Q}\frac{d\bfa}{(1+\gamma_0(\bfa)\tau_{\alpha})^2}\sim\frac{C}{\tau^*-\tau_{\alpha}}.
\end{equation}
Next we need the corresponding behavior of the exponential term
\begin{equation}
\label{exp}
e^{-\alpha t}=\operatorname{exp}\left\{-\alpha t_{\alpha}^E(\tau_{\alpha})\right\}
\end{equation}
in \eqref{gammae}-\eqref{tauivpe} as $\tau_{\alpha}\nearrow \tau^*$. In \eqref{exp} we have introduced the notation $t= t_{\alpha}^E$ to differentiate the time variable in \eqref{gammae}-\eqref{tauivpe} from that in \eqref{gammab}-\eqref{tauivpb}. For $\alpha>0$, we will refer to $t_{\alpha}^E$ and $t_{\alpha}^B$ as \emph{the damped Euler time variable} and respectively \emph{the damped Boussinesq time variable}.

Since $\gamma_0$ satisfies the conditions of Theorem \ref{blowupundamped}, the limit $T^E\equiv\lim_{\tau_{\alpha}\to\tau^*}t^E(\tau_{\alpha})$, for 
\begin{equation}
\label{undampedeulertime2}
t^E(\tau_{\alpha})=\int_0^{\tau_{\alpha}}\left(\int_{Q}{\frac{d\bfa}{1+\gamma_0(\bfa)\mu}}\right)^2d\mu
\end{equation}
the associated \emph{undamped Euler time variable} \eqref{undampedeulertime}, is positive and finite, and represents the blowup time for $\gamma^E$ in Theorem \ref{blowupundamped}. That $T^E$ is finite follows from \eqref{int1} and \eqref{undampedeulertime2} which imply, for $\tau^*-\tau_{\alpha}>0$ small, the asymptotic relation 
\begin{equation}
\label{asymp}
T^E-t^E\sim (\tau^*-\tau_{\alpha})(\ln(\tau^*-\tau_{\alpha})-1)^2
\end{equation}
whose right-hand side vanishes as $\tau_{\alpha}\nearrow\tau^*$. 

Since \eqref{tauivpe} implies that the damped Euler time variable satisfies
\begin{equation}
\label{time2}
t_{\alpha}^E(\tau_{\alpha})=-\frac{1}{\alpha}\ln\left|1-\alpha t^E(\tau_{\alpha})\right|,
\end{equation}
it follows that the behavior of the exponential \eqref{exp} is determined by the limit
\begin{equation}
\label{limit}
T_{\alpha}^E\equiv\lim_{\tau_{\alpha}\nearrow\tau^*}t_{\alpha}^E(\tau_{\alpha}),
\end{equation}
which in turn depends on whether $0<\alpha<1/T^E$ or $\alpha\geq1/T^E$.

\subsection*{Case 1 - Finite-time blowup for $0<\alpha<1/T^E$.}\hfill 

For $0<\alpha<1/T^E$, the argument of the logarithm in \eqref{time2} satisfies $0<1-\alpha t^E(\tau_{\alpha})\leq1$ for all $0\leq\tau_{\alpha}\leq\tau^*$. This implies that the limits
 \begin{equation*}
\label{case1time}
\lim_{\tau_{\alpha}\nearrow\tau^*}t_{\alpha}^E(\tau_{\alpha})=T_{\alpha}^E,\quad\qquad\lim_{\tau_{\alpha}\nearrow\tau^*}\operatorname{exp}\left\{-\alpha t_{\alpha}^E(\tau_{\alpha})\right\}=\operatorname{exp}\left\{-\alpha T_{\alpha}^E\right\}
\end{equation*}
are both positive and finite. Using estimates \eqref{int1} and \eqref{int2} on \eqref{gammae}, it follows that for $\bfa=\bfa_0$ the spatial term dominates and $\gamma_{\alpha}^E$ diverges to negative infinity,
$$\gamma_{\alpha}^E(\bfX(\bfa_0,t),t)\sim-\frac{C}{(\tau^*-\tau_{\alpha})\ln^2(\tau^*-\tau_{\alpha})}\to-\infty$$
as $t\nearrow T_{\alpha}^E$. If instead $\bfa\neq\bfa_0$, the second term in the bracket of \eqref{gammae} now dominates and the blowup is to positive infinity,
$$\gamma_{\alpha}^E(\bfX(\bfa,t),t)\sim-\frac{C}{(\tau^*-\tau_{\alpha})\ln^3(\tau^*-\tau_{\alpha})}\to+\infty.$$
From \eqref{time2} note that the blowup time $T_{\alpha}^E$ approaches the undamped blowup time $T^E$ as the damping coefficient vanishes. Further, for $\alpha>0$ and $\tau_{\alpha}\geq0$, we have that $t_{\alpha}^E(\tau_{\alpha})\geq t^E(\tau_{\alpha})$ with equality only at $\tau_{\alpha}=0$. Thus $T_{\alpha}^E>T^{E}$.
%
%

\subsection*{Case 2 - Convergence to a nontrivial steady state for $\alpha=1/T^E$.}\hfill

For $\alpha=1/T^E$, 
\begin{equation}
\label{case2time}
\lim_{\tau_{\alpha}\nearrow\tau^*}t_{\alpha}^E(\tau_{\alpha})=-T^E\lim_{\tau_{\alpha}\nearrow\tau^*}\left|1-\frac{t^E(\tau_{\alpha})}{T^E}\right|=+\infty
\end{equation}
and the exponential \eqref{exp} vanishes,
$$\lim_{\tau_{\alpha}\nearrow\tau^*}\operatorname{exp}\left\{-\alpha t_{\alpha}^E(\tau_{\alpha})\right\}=0.$$
To determine how fast we use \eqref{asymp} and \eqref{time2} to obtain
\begin{equation}
\label{exp2}
\operatorname{exp}\left\{-\alpha t_{\alpha}^E(\tau_{\alpha})\right\}
=\frac{1}{T^E}(T^E-t^E(\tau_{\alpha}))\sim C(\tau^*-\tau_{\alpha})(\ln(\tau^*-\tau_{\alpha})-1)^2
\end{equation}
for $\tau^*-\tau_{\alpha}>0$ small. Using \eqref{int1}-\eqref{int2} and  \eqref{case2time}-\eqref{exp2} on \eqref{gammae}, we see that $\gamma_{\alpha}^E$ converges to a mean-zero nontrivial steady state as $t\to+\infty$,
\begin{equation*}
\label{ae}
\lim_{t\to+\infty}\gamma_{\alpha}^{E}(\bfX(\bfa,t),t)=
\begin{cases}
-C,\qquad & \bfa=\bfa_0,
\\
0,\qquad & \bfa\neq\bfa_0.
\end{cases}
\end{equation*}

\subsection*{Case 3 - Convergence to the trivial steady state for $\alpha>1/T^E$.}\hfill

For $\alpha>1/T^E$, \eqref{time2} implies the existence of $0<\tau_1<\tau^*$ such that $t^E(\tau_{\alpha})\nearrow\frac{1}{\alpha}<T^E$ as $\tau_{\alpha}\nearrow\tau_1$. Then $t_{\alpha}^E(\tau_{\alpha})\to+\infty$ as $\tau_{\alpha}\nearrow\tau_1$, and
$$\lim_{\tau_{\alpha}\nearrow\tau_1}\operatorname{exp}\left\{-\alpha t_{\alpha}^E(\tau_{\alpha})\right\}=0.$$
Since the spatial and integral terms in \eqref{gammae} stay positive and finite for $0\leq\tau_{\alpha}\leq\tau_1$, it follows that $\gamma_{\alpha}^E(\bfx,t)\to0$ as $t\to+\infty$ for all $\bfx\in Q$.

\end{proof}


Before proving Theorem \ref{dampedboussinesqthm} we establish the following Lemma.

\begin{lemma}
\label{lemma}
Let $\rho_0(\bfx)\geq0$ and set $\tau^*=-1/m_0$. If $0\leq\tau_{\alpha}(t)<\tau^*$ for all $t\in\Xi\equiv[0,T)$, $0<T\leq+\infty$, then $0<\phi_1(t)<+\infty$ on $\Xi$.
\end{lemma}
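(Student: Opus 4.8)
The plan is to run a continuity (bootstrap) argument on the self-consistent formula \eqref{phib} for $\phi_1$, exploiting that \emph{both} conclusions $\phi_1>0$ and $\phi_1<+\infty$ can be read off from two-sided control of the denominator $D(\bfa,t):=\Psi(\bfa,t)-\rho_0(\bfa)\sigma(t)$. First I would record the elementary estimates on $\Psi$: since $\gamma_0\geq m_0$ and $\tau_{\alpha}(t)\geq0$, we have $\Psi(\bfa,t)=1+\gamma_0(\bfa)\tau_{\alpha}(t)\geq 1+m_0\tau_{\alpha}(t)$, which is strictly positive precisely because of the hypothesis $\tau_{\alpha}(t)<\tau^*=-1/m_0$; the upper bound $\Psi\leq 1+M_0\tau_{\alpha}$ with $M_0=\max_Q\gamma_0$ holds as well. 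I would also note that \eqref{tauivpb} gives $\tau_{\alpha}'=e^{-\alpha t}\phi_1^{-2}>0$, so $\tau_{\alpha}$ is increasing.

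Define $T_*=\sup\{t\in\Xi:0<\phi_1<+\infty \text{ on }[0,t]\}$. Since $\tau_{\alpha}(0)=0$ and $\sigma(0)=0$ give $\phi_1(0)=1$, and $\phi_1$ depends continuously on $t$, we have $T_*>0$; I argue by contradiction that $T_*=\sup\Xi$. On $[0,T_*)$ the positivity of $\phi_1$ makes the integrand of \eqref{sigma} positive, so $\sigma\leq0$; combined with $\rho_0\geq0$ this yields $D\geq\Psi\geq1+m_0\tau_{\alpha}>0$. This single inequality already delivers the upper bound $\phi_1(t)=\int_Q D^{-1}\,d\bfa\leq (1+m_0\tau_{\alpha}(t))^{-1}<+\infty$, so $\phi_1$ cannot blow up as $t\nearrow T_*$, provided $\tau_{\alpha}(T_*)<\tau^*$ — which holds by hypothesis whenever $T_*<\sup\Xi$ (and then $T_*<+\infty$).

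The main obstacle is the opposite degeneration, ruling out $\phi_1\searrow0$ as $t\nearrow T_*$; since $\Psi$ is bounded, this can only happen if $|\sigma(t)|\to+\infty$, so everything reduces to an a priori bound on $\sigma$. Here the hypothesis $\tau_{\alpha}<\tau^*$ does the essential work: from \eqref{tauivpb}, $\tau_{\alpha}(t)=\int_0^t e^{-\alpha s}\phi_1^{-2}\,ds<\tau^*$, and since $T_*<+\infty$ forces $e^{-\alpha s}\geq e^{-\alpha T_*}$ on $[0,T_*]$, the factor $\phi_1^{-2}$ is \emph{integrable} there, with $\int_0^{T_*}\phi_1^{-2}\,ds\leq e^{\alpha T_*}\tau^*$. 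Feeding this, together with $e^{\alpha(s-z)}\leq1$ (as $z\geq s$) and the already-established upper bound $\phi_1(s)\leq(1+m_0\tau_{\alpha}(s))^{-1}\leq(1+m_0\tau_{\alpha}(T_*))^{-1}$, into the double integral \eqref{sigma} bounds $|\sigma|$ uniformly on $[0,T_*]$. Consequently $D$ is bounded above by a finite constant $B$, giving the lower bound $\phi_1\geq 1/B>0$.

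Finally, the two-sided bound $0<1/B\leq\phi_1(T_*)\leq(1+m_0\tau_{\alpha}(T_*))^{-1}<+\infty$ shows $D(\cdot,T_*)$ is bounded and bounded away from zero, so $\gamma_{\alpha}^B$, $\rho_{\alpha}$, and $I$ (via \eqref{gammab}, \eqref{rho}, \eqref{nonlocalb}) stay finite at $T_*$; standard local solvability of \eqref{homo} with continuous coefficient $I$ then continues $\phi_1$ positively past $T_*$, contradicting the definition of $T_*$ unless $T_*=\sup\Xi$. This closes the bootstrap and yields $0<\phi_1<+\infty$ on all of $\Xi$. The one point demanding care is the circular coupling between $\phi_1$ and $\sigma$: I must ensure the $\sigma$-bound is derived \emph{only} from the hypothesis on $\tau_{\alpha}$ and the propagated positivity of $\phi_1$ on $[0,T_*)$, not from the lower bound I am trying to establish.
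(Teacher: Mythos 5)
Your proposal is correct and follows essentially the same route as the paper's proof: a continuation argument in which positivity of $\phi_1$ forces $\sigma\leq0$ (via \eqref{sigma}), hence the denominator bound $\Psi-\rho_0\sigma\geq\Psi\geq1+m_0\tau_{\alpha}>0$ and the upper bound on $\phi_1$, while the hypothesis $\tau_{\alpha}<\tau^*$ is what keeps $\sigma$ bounded and thus keeps $\phi_1$ away from zero. The only minor difference is in that last step: you bound $|\sigma|$ quantitatively from the double-integral form of \eqref{sigma} using $\int_0^{T_*}\phi_1^{-2}\,ds\leq e^{\alpha T_*}\tau^*$, whereas the paper argues by contradiction from the first form of \eqref{sigma}, showing that $\sigma\to-\infty$ would force $\tau_{\alpha}\to+\infty$; both hinge on exactly the same two facts.
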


\begin{proof}
Suppose $\rho_0(\bfa)\geq0$ and $0\leq\tau_{\alpha}(t)<\tau_*$ for all $t\in\Xi\equiv[0,T)$, $0<T\leq+\infty$. The latter and \eqref{psi} imply that $\Psi(\bfa,t)=1+\gamma_0(\bfa)\tau_{\alpha}(t)>0$ for all $(\bfa,t)\in Q\times\Xi$. Since $\phi_1(t)$ satisfies $\phi_1(0)>0$, there exists, by continuity, a positive $t_1\in\Xi$ such that $0<\phi_1(t)<+\infty$ for all $t\in[0,t_1)\subset\Xi$. Note that $\phi_1(t)$ cannot diverge to positive infinity at $t_1$. Indeed, suppose 
\begin{equation}
\label{phiinf}
\lim_{t\nearrow t_1}\phi_1(t)=+\infty.
\end{equation}
Since $\rho_0(\bfa)\geq0$, $\Psi(\bfa,t)>0$ for all $(\bfa,t)\in Q\times[0,t_1]$, and $\sigma(t)\leq0$ on $[0,t_1)$ (by \eqref{sigma}ii)), 
$$\Psi(\bfa,t)-\rho_0(\bfa)\sigma(t)\geq\Psi(\bfa,t)>0$$
for all $(\bfa,t)\in Q\times[0,t_1)$, and thus
\begin{equation}
\label{upsidedown}
\phi_1^{-1}(t)\geq\left(\int_Q{\frac{d\bfa}{\Psi(\bfa,t)}}\right)^{-1}>0
\end{equation}
for $t\in[0,t_1)$. From \eqref{phiinf} and \eqref{upsidedown} it follows that
$$\lim_{t\nearrow t_1}\int_Q{\frac{d\bfa}{\Psi(\bfa,t)}}=+\infty,$$
contradicting $\Psi(\bfa,t)>0$ for all $(\bfa,t)\in Q\times\Xi$. 

Now suppose there exists $t_2\in\Xi$ such that $\phi_1(t)$ vanishes as $t\nearrow t_2$, namely,
\begin{equation}
\label{phi0}
\lim_{t\nearrow t_2}\int_Q{\frac{d\bfa}{\Psi(\bfa,t)-\rho_0(\bfa)\sigma(t)}}=0.
\end{equation}
Since $\rho_0(\bfa)\geq0$ and $0<\phi_1(t)<+\infty$ on $[0,t_2)\subset\Xi$, $\sigma(t)\leq0$ is continuous on $[0,t_2)$. Then boundedness of $\tau_{\alpha}$ on $[0,t_2]$ and  $\Psi(\bfa,t)-\rho_0(\bfa)\sigma(t)>0$ for all $(\bfa,t)\in Q\times[0,t_2)$ imply that, for \eqref{phi0} to hold, $\sigma(t)\to-\infty$ as $t\nearrow t_2$. From this and \eqref{sigma}i) it follows that
\begin{equation*}
\label{a}
\lim_{t\nearrow t_2}\left\{\tau_{\alpha}(t)\int_0^t{e^{\alpha s}\phi_1(s)ds}\right\}=+\infty,
\end{equation*}
or since $0\leq\phi_1<+\infty$ on $[0,t_2]$, $\tau_{\alpha}(t)\to+\infty$ as $t\nearrow t_2$, contradicting $0\leq\tau_{\alpha}<\tau^*$ on $\Xi$.

\end{proof}

\begin{proof}[\textbf{Proof of Theorem \ref{dampedboussinesqthm}}]

Suppose $\gamma_0(\bfx)$ satisfies the conditions in Theorem \ref{blowupundamped} and $\rho_0(\bfx)\geq0$ for all $\bfx\in Q$. Further, and without loss of generality, assume there is one location $\bfa_1\in Q$ such that $\rho_0(\bfa_1)=0$ and $\gamma_0(\bfa_1)=m_0$. Set $\tau_*=-1/m_0$. Then Lemma \ref{lemma} and formulas \eqref{sigma} and \eqref{dampedjacobianb} imply that, as long as $0\leq\tau_{\alpha}<\tau_*$,
\begin{equation}
\label{jacest1}
J(\bfa,t)\geq\frac{1}{1+\gamma_0(\bfa)\tau_{\alpha}(t)-\rho_0(\bfa)\sigma(t)}\left(\int_Q{\frac{d\bfa}{1+\gamma_0(\bfa)\tau_{\alpha}(t)}}\right)^{-1}
\end{equation}
for all $\bfa\in Q$. Setting $\bfa=\bfa_1$ on \eqref{jacest1} and using \eqref{int1}, it follows that
\begin{equation*}
\label{jacest2}
J(\bfa_1,t)\geq\frac{1}{1+m_0\tau_{\alpha}(t)}\left(\int_Q{\frac{d\bfa}{1+\gamma_0(\bfa)\tau_{\alpha}(t)}}\right)^{-1}\sim-\frac{C}{(\tau^*-\tau_{\alpha})\ln(\tau^*-\tau_{\alpha})}
\end{equation*}
for $\tau^*-\tau_{\alpha}>0$ small, and therefore
\begin{equation*}
\label{jacest4}
J(\bfa_1,t)\to+\infty
\end{equation*}
as $\tau_{\alpha}\nearrow\tau^*$. Next, for $0<\alpha<1/T^E$ and $T^E>0$ the finite blowup time of the undamped Euler equation \eqref{undampedgammaeqn}, we prove the existence of a finite $T_{\alpha}^B>0$ such that $t\nearrow T_{\alpha}^B$ as  $\tau_{\alpha}\nearrow\tau^*$. From \eqref{phib}, \eqref{tauivpb} and Lemma \ref{lemma}, 
\begin{equation}
\label{timeb1}
e^{-\alpha t}dt\leq\left(\int_Q{\frac{d\bfa}{1+\gamma_0(\bfa)\tau_{\alpha}}}\right)^2d\tau_{\alpha}
\end{equation}
for $0\leq\tau_{\alpha}<\tau^*$. Denote the damped Boussinesq time variable $t=t_{\alpha}^B(\tau_{\alpha})$. Then integrating \eqref{timeb1} between 0 and $ t_{\alpha}^B$ yields the upper-bound 
\begin{equation}
\label{timeb2}
t_{\alpha}^B(\tau_{\alpha})\leq-\frac{1}{\alpha}\ln\left|1-\alpha t^E(\tau_{\alpha})\right|=t_{\alpha}^E(\tau_{\alpha})
\end{equation}
for $t_{\alpha}^B$ in terms of the damped Euler time variable $t_{\alpha}^E$, which in turn depends on the undamped Euler time variable $t^E$ in \eqref{undampedeulertime} and the damping coefficient $\alpha>0$. Since $\gamma_0$ satisfies the conditions of Theorem \ref{blowupundamped}, the limit $T^E\equiv\lim_{\tau_{\alpha}\nearrow\tau^*}t^E(\tau_{\alpha})$ is positive and finite. Suppose $0<\alpha<1/T^E$. Then Theorem \ref{dampedeulerthm} implies that $T_{\alpha}^E\equiv\lim_{\tau_{\alpha}\nearrow\tau^*}t_{\alpha}^E(\tau_{\alpha})$ is also positive and finite. Thus letting $\tau_{\alpha}\nearrow\tau^*$ in \eqref{timeb2}, we see that the blowup time $T_{\alpha}^B>0$ is finite and satisfies $$T_{\alpha}^B\equiv\lim_{\tau_{\alpha}\nearrow\tau^*}t_{\alpha}^B(\tau_{\alpha})\leq T_{\alpha}^E.$$
This finishes the proof of the first part of the Theorem. For the second part we adapt an argument used in \cite{childress, sarria2} to construct blowup and respectively global infinite-energy solutions of the 2D Euler and inviscid Boussinesq equations. 

Let $\rho_0(\bfx)=-\sin^2(2\pi x)$. Then we look for a solution of \eqref{ode} satisfying $\xi(\bfx,0)\equiv1$ and $\xi_t(\bfx,0)=\gamma_0(\bfx)$. Suppose $\mu_1(t)$ and $\mu_2(t)$ solve 
\begin{equation}
\label{mueqn}
\mu_1''+\alpha\mu_1'-I(t)\mu_1=0,\qquad \mu_2''+\alpha\mu_2'-I(t)\mu_2=1
\end{equation}
with $\mu_1(0)=\mu_1'(0)=1$ and $\mu_2(0)=0$, $\mu_2'(0)\neq0$. Then 
\begin{equation}
\label{mu}
\mu(\bfx,t)=\mu_1(t)+\rho_0(\bfx)\mu_2(t)
\end{equation}
satisfies
$$\mu_{tt}+\alpha\mu_t-I(t)\mu=\rho_0,\qquad \mu(\bfx,0)=1.$$
Note that $\mu_2'(0)\neq0$ must be such that $\gamma_0(\bfx)=\mu_t(\bfx,0)=1+\rho_0(\bfx)\mu_2'(0)$
has mean zero over $Q$, as required by \eqref{zeromean}. Now, since $J$ satisfies \eqref{mean1} it follows that 
$$1=\int_Q{\frac{d\bfx}{\mu_1-\sin^2(2\pi x)\mu_2}},$$
which yields the relation
\begin{equation}
\label{relation}
\mu_2=\mu_1-\frac{1}{\mu_1}.
\end{equation}
Using \eqref{relation} we see that $\gamma_0(\bfx)=\cos(4\pi x)$, which satisfies the mean-zero condition \eqref{zeromean}. Next, using \eqref{relation} on \eqref{mueqn}ii) to eliminate the nonlocal term $I(t)$ in \eqref{mueqn}i) gives, after some simplification,
$$\frac{d}{dt}\left(\frac{\mu_1'}{\mu_1}\right)+\alpha\frac{\mu_1'}{\mu_1}=\frac{\mu_1}{2}.$$
Dividing both sides by $\mu_1$, differentiating the resulting equation, and then setting $N(t)=\mu_1'/\mu_1$ now leads to
\begin{equation}
\label{i}
\frac{d}{dt}\left(N'-\frac{1}{2}N^2+\alpha N\right)=\alpha N^2.
\end{equation}
Since $N(0)=1$ and $N'(0)=\frac{1}{2}-\alpha$, we integrate \eqref{i} and use a standard Gronwall-type argument to obtain 
\begin{equation}
\label{z0}
z'\geq\frac{1}{2}e^{-\alpha t}z^2,\qquad z(0)=1
\end{equation}
for $z(t)=e^{\alpha t}N(t)$. Set 
\begin{equation}
\label{timeneg}
T_{\alpha}^B=-\frac{1}{\alpha}\ln\left(1-2\alpha\right)
\end{equation}
for $0<\alpha<1/2$. Note that $T_{\alpha}^B$ is positive and finite. Then solving \eqref{z0} for $t\in[0,T_{\alpha}^B)$ gives
$$z(t)\geq\frac{2\alpha}{e^{-\alpha t}-(1-2\alpha)},$$
or since $z(t)=e^{\alpha t}N(t)=e^{\alpha t}\frac{d}{dt}\left(\ln\mu_1(t)\right)$, 
\begin{equation}
\label{mu1}
\mu_1(t)\geq\frac{4\alpha^2}{\left(e^{-\alpha t}-(1-2\alpha)\right)^2}.
\end{equation}
From \eqref{mu1} it follows that $\mu_1\to+\infty$ as $t\nearrow T_{\alpha}^B$. Then by \eqref{mu} and  \eqref{relation},
$$\frac{1}{J(\bfx,t)}\geq\cos^2(2\pi x)\mu_1(t)+\frac{\sin^2(2\pi x)}{\mu_1(t)},$$
which implies that $J(\bfx,t)\to0$ as $t\nearrow T_{\alpha}^B$ for $\bfx\in\mathcal{B}\equiv\{(x,y)\in Q\,|\, x\notin\{1/4,3/4\}\}$. Note that $\mathcal{B}$ are precisely the points where $\gamma_0(\bfx)=\cos(4\pi x)$ does not equal its negative minimum.

Lastly, by setting $\alpha=0$ in \eqref{i} it is easy to see that the Jacobian of the associated undamped Boussinesq system vanishes as $t\nearrow 2$, which agrees with $T^B_{\alpha}\to2$ in \eqref{timeneg} as $\alpha\to0$. Thus we may write $0<\alpha<1/2$ as $0<\alpha<1/T^B$ for $T^B=2$ the blowup time for the solution of the undamped Boussinesq system with the same initial data.

\end{proof}

Note that the blowup result in part 2 of Theorem \ref{dampedboussinesqthm} is effectively one dimensional in the sense that the initial data depends only on one of the two coordinate variables. Currently we do not know if this particular blowup is suppressed for $\alpha\geq1/T^B$, or if a solution of the damped Boussinesq system \eqref{dampedeqnb}-\eqref{nonlocalb} persist globally in time for $\alpha\geq1/T^E$ when the initial data satisfies the conditions of part one of Theorem \ref{dampedboussinesqthm} \emph{and} is such that the associated undamped solution blows up in finite time.

\section{Acknowledgments}

This work was partially supported by the Div III \& P Research Funding Committee at Williams College.

\makeatletter \renewcommand{\@biblabel}[1]{\hfill#1.}\makeatother


\begin{thebibliography}{1000}


\bibitem[1]{adhikaria01} D. Adhikari, C. Cao and J. Wu, Global regularity results for the 2D Boussinesq equations with vertical dissipation, J. Differential Equations 251 (2011), 1637-1655.

\bibitem[2]{adhikaria}
D. Adhikari, C. Cao, J. Wu and X. Xu, Small global solutions to the damped two-dimensional Boussinesq equations, J. Differential Equations 256 (2014), 3594-3613.

\bibitem[3]{BKM} J.T. Beale, T. Kato, and A. Majda, Remarks on the breakdown of smooth solutions for the 3-D Euler equations, Commun. Math. Phys. \textbf{94} (1984), 61-66.

\bibitem[4]{chae07} D. Chae, Global regularity for the 2D Boussinesq equations with partial viscosity terms, Adv. Math. 203 (2006), 497-513.

\bibitem[5]{childress}
S. Childress, G.R. Ierley, E.A. Spiegel and W.R. Young, Blow-up of unsteady two-dimensional Euler and Navier-Stokes solutions having stagnation-point form, J. Fluid Mech. 203 (1989), 1-22.


\bibitem[6]{constantin2} P. Constantin, The Euler equations and nonlocal conservative Riccati equations, Int. Math. Res. Not. 9 (2000), 455-465.


\bibitem[7]{constantin1}
P. Constantin, Singular, weak, and absent: Solutions of the Euler equations, Physica D 237 (2008), 1926-1931.


\bibitem[8]{fefferman} C.L. Fefferman, \emph{Existence and smoothness of the Navier-Stokes equation}, the millennium prize problems, Clay Math. Inst., Cambridge, MA (2006), 57-67.

\bibitem[9]{gibbon1} J.D. Gibbon, A.S. Fokas, C.R. Doering, Dynamically stretched vortices as solutions of the 3D Navier–Stokes equations, Physica D (1999), 497-510.

\bibitem[10]{gibbon3} J.D. Gibbon, K. Ohkitani, Singularity formation in a class of stretched solutions of the equations for ideal magneto-hydrodynamics. Nonlinearity 14 (5) (2001), 1239-1264.

\bibitem[11]{gill} A.E. Gill, \textsl{Atmosphere-Ocean Dynamics}, Academic Press (London) (1982).


\bibitem[12]{hou1} T.Y. Hou, Blow–up or no blow-up? Unified computational and analytic approach to 3D incompressible Euler and Navier–Stokes equations, Acta Numerica (2009), 277-346.


\bibitem[13]{hou0} T.Y. Hou and C. Li, Global well-posedness of the viscous Boussinesq equations.
Discrete Contin. Dyn. Syst. 12 (2005), 1-12.

\bibitem[14]{hou22} T. Y. Hou and C. Li, Dynamic stability of the 3D axi-symmetric Navier-Stokes equations
with swirl, Comm. Pure Appl. Math., 61 (5) (2008), 661-697.

\bibitem[15]{lai} M.J. Lai, R.H. Pan, K. Zhao, Initial boundary value problem for 2D viscous Boussinesq equations, Arch. Ration. Mech. Anal., 199 (2011), 739-760.

\bibitem[16]{majda1} A. Majda, \emph{Introduction to PDEs and Waves for the Atmosphere and Ocean}, Courant Lecture Notes in Mathematics, vol. 9. AMS/CIMS, Providence (2003).

\bibitem[17]{majdabertozzi} A.J. Majda and A.L. Bertozzi, \emph{Vorticity and incompressible flow}, Cambridge Texts in Applied Mathematics, Cambridge, 2002.

\bibitem[18]{ohkitani} K. Ohkitani and J.D. Gibbon, Numerical study of singularity formation in a class
of Euler and Navier-Stokes flows. Physics of Fluids 12 (2000), 3181-3194.

\bibitem[19]{pedlosky} J. Pedlosky, \emph{Geophysical Fluid Dynamics}, Springer, New York (1987).

\bibitem[20]{sarria1} A. Sarria, Regularity of stagnation-point form solutions of the two-dimensional incompressible Euler equations, Differential and Integral Equations, 28 (3-4) (2015), 239-254.

\bibitem[21]{sarria2} A. Sarria and J. Wu, Blow-up in stagnation-point form solutions of the inviscid 2d Boussinesq equations, J Differ Equations, 259 (2015), 3559-3576.

\bibitem[22]{stuart} J.T. Stuart, Singularities in three-dimensional compressible Euler flows with vorticity, Theoret. Comput. Fluid Dyn. 10 (1998), 385-391.

\bibitem[23]{yang} W. Yang, Q. Jiu and J. Wu, Global well-posedness for a class of 2D Boussinesq systems with
fractional dissipation, J. Differential Equations 257 (2014), 4188-4213.

\end{thebibliography}
\end{document}